\documentclass[12pt]{amsart}

\usepackage{graphicx} 
\usepackage{scrextend}
\usepackage{amsfonts, amsmath, amssymb,amsthm,comment}  
\usepackage{times,enumerate}
\usepackage{mathdots}%
\usepackage{nccmath}
\usepackage{mathtools}
\usepackage{multicol}
\setlength{\columnsep}{-3cm}
    {\end{pmatrix}\end{medsize}}%
\usepackage{dsfont,bbm}
\usepackage{rotating}
\usepackage{lscape}
\usepackage{url}
\usepackage{multicol}
\usepackage{thmtools, thm-restate}
\usepackage{blkarray}
\usepackage{multicol}
\usepackage[margin=2.5cm]{geometry}
 \usepackage{hyperref}
\usepackage{cancel}
\usepackage{kbordermatrix}

\def\VR{\kern-\arraycolsep\strut\vrule &\kern-\arraycolsep}
\def\vr{\kern-\arraycolsep & \kern-\arraycolsep}
\makeatletter
\newcommand*{\sublabel}[1]{%
    \let\old@currentlabel\@currentlabel%
    \renewcommand{\@currentlabel}{\theenumii}%
    \label{#1}%
    \let\@currentlabel\old@currentlabel%
}
\makeatother

\newcommand{\block}[1]{
  \underbrace{\begin{matrix}0 & \cdots & 0\end{matrix}}_{#1}
}

\usepackage[usenames, dvipsnames]{xcolor}
\DeclareGraphicsExtensions{.pdf,.png,.jpg}

\hypersetup{
    colorlinks,
    linkcolor={blue},
    citecolor={blue},
    urlcolor={blue}
}

\DeclareMathOperator{\Span}{span}

\DeclareMathOperator{\Rank}{rank}

\makeatletter
\def\widebreve{\mathpalette\wide@breve}
\def\wide@breve#1#2{\sbox\z@{$#1#2$}%
     \mathop{\vbox{\m@th\ialign{##\crcr
\kern0.08em\brevefill#1{0.8\wd\z@}\crcr\noalign{\nointerlineskip}%
                    $\hss#1#2\hss$\crcr}}}\limits}
\def\brevefill#1#2{$\m@th\sbox\tw@{$#1($}%
  \hss\resizebox{#2}{\wd\tw@}{\rotatebox[origin=c]{90}{\upshape(}}\hss$}
\makeatletter

\newcommand{\x}{{\tt x}}
\newcommand{\y}{{\tt y}}
\newcommand{\RR}{\mathbb R}

\newcommand{\NN}{\mathbb N}

\newcommand{\Tt}{{\tt t}}

\newcommand{\cH}{\mathcal H}

\newcommand{\cZ}{\mathcal Z}

\newcommand{\benu}{\begin{enumerate}}
\newcommand{\eenu}{\end{enumerate}}
\newcommand{\bop}{\begin{opomba}}
\newcommand{\eop}{\end{opomba}}

\newtheorem{theorem}{Theorem}[section]

\newtheorem{lemma}[theorem]{Lemma}

\newtheorem{problem}{Problem}

\theoremstyle{definition}

\newtheorem{example}[theorem]{Example}

\newcommand{\mbf}{\mathbf}

\theoremstyle{remark}
\newtheorem{remark}[theorem]{Remark}

\numberwithin{equation}{section}


\begin{document}

\title{Gaussian Quadratures with prescribed nodes via moment theory}

\author[R. Nailwal]{Rajkamal Nailwal}
\address{Rajkamal Nailwal, Institute of Mathematics, Physics and Mechanics, Ljubljana, Slovenia.}
\email{rajkamal.nailwal@imfm.si, raj1994nailwal@gmail.com}

\author[A. Zalar]{Alja\v z Zalar}
\address{Alja\v z Zalar, 
Faculty of Computer and Information Science, University of Ljubljana  \& 
Faculty of Mathematics and Physics, University of Ljubljana  \&
Institute of Mathematics, Physics and Mechanics, Ljubljana, Slovenia.}
\email{aljaz.zalar@fri.uni-lj.si}
\thanks{The second-named author was supported by the ARIS (Slovenian Research and Innovation Agency)
research core funding No.\ P1-0228 and grants No.\ J1-50002, J1-60011.} 



%

\subjclass[2020]{Primary 65D32, 47A57, 47A20, 44A60; Secondary 
15A04, 47N40.}

\keywords{Gaussian quadrature, truncated moment problem, representing measure, moment matrix, localizing moment matrix.}
\date{\today}
\maketitle

\begin{abstract}
    Let $\mu$ be a positive Borel measure on the real line
    and let $L$ be the linear functional on univariate polynomials of bounded degree, defined as integration with respect to $\mu$.
	In \cite{BKRSV20}, the characterization of all minimal quadrature rules of $\mu$ in terms of the roots of a bivariate polynomial is given and two determinantal representations of this polynomial are established.
    In particular, the authors solved the question of the existence of a minimal quadrature rule with one prescribed node, leaving open the extension to more prescribed nodes \cite[Problem 1]{BKRSV20}. In this paper, we solve this problem using moment theory as the main tool.
\end{abstract}

\section{Introduction}

Given a real sequence 
\begin{equation}
	\label{230422-1204}
		\gamma\equiv \gamma^{(D)}=(\gamma_{0},\gamma_{1},\ldots,\gamma_{D})\in \RR^{D+1}
\end{equation}
of degree $D$, $D\in \NN\cup \{0\}$,
the \textbf{$\RR$--truncated moment problem ($\RR$--TMP)} for $\gamma$
asks to characterize the existence of a positive Borel measure $\mu$ on $\RR$, such that
	\begin{equation}
		\label{moment-measure-cond-univ}
			\gamma_{i}=\int_{\RR}x^i d\mu\quad \text{for}\quad i\in \NN\cup\{0\},\;0\leq i\leq D.
	\end{equation}
If such a measure exists, we say that $\gamma$ has a \textbf{$\RR$--representing measure ($\RR$--rm)}.  

The solution to the $\RR$--TMP in terms of the properties of the corresponding Hankel matrix, called \textbf{moment matrix},
was established in \cite{CF91} relying on certificates of nonnegativity for polynomials, while special cases were already solved in \cite{AK62,KN77,Ioh82}. See also \cite[Chapter 9]{Sch17} for a nice overview.
In \cite{CF91} the question of the existence and uniqueness of the solution is settled. In the nonsingular case, i.e., when the moment matrix is positive definite, there is a one-parametric family of finitely atomic representing measures, supported on the smallest possible number of atoms equal to the rank of the moment matrix. Such a solution is called \textbf{Gaussian quadrature rule (GQR)} in numerical analysis and is important because it allows efficient computation of definite integrals of univariate functions.
In \cite{BKRSV20}, the authors investigated the question of characterizing the GQR\textit{s} in terms of symmetric determinantal representations involving moment matrices. Their main result \cite[Theorem 1.4]{BKRSV20}, obtained using convex analysis and algebraic geometry, characterizes when a given atom, also called a \textbf{node}, is a part of a GQR and how to determine other nodes in terms of the determinant of a certain univariate matrix polynomial. Here we emphasize that they allow node at infinity, called \textbf{evaluation at $\infty$}, and the corresponding quadrature rule is called \textbf{generalized GQR (gGQR)}. A characterization for the case that all nodes are real, in terms of the invertibility of a certain matrix, is also given. A characterization of (g)GQR\textit{s} with finitely many prescribed nodes remains an open problem:

\begin{problem}[{\cite[Problem 1]{BKRSV20}}]
\label{problem}
Given $d_1,d_2\geq 1$, a degree $D:=d_1+2d_2-1$ sequence
$\gamma^{(D)}$ as in \eqref{230422-1204}
such that the moment matrix
$M_{\lfloor\frac{D}{2}\rfloor}$ is positive definite and real numbers $x_{1},\ldots x_{d_1}$, when does a (g)GQR with $d_1+d_2$ nodes,
containing $x_{1},\ldots,x_{d_1}$, exist?
\end{problem}

In the formulation of Problem \ref{problem} from \cite{BKRSV20}, $d_1=n-1$, $d_2=\ell+1$ and the sequence $\gamma^{(D)}$ is assumed to come from some positive measure. Since $M_{\lfloor \frac{D}{2}\rfloor}$ is assumed to be positive definite, the same automatically holds in the formulation above (see e.g., Theorem \ref{Hamburger} below).
The choice of degree $D$ in Problem \ref{problem} is natural due to the fact that for each prescribed node only its density is unknown, while for each non-prescribed node both the value of the node and its density are unknown. In total, there are exactly $D+1$ variables, which is equal to the length of $\gamma^{(D)}$.
\bigskip

A necessary condition for the solution to Problem \ref{problem} is \cite[Proposition 4.1]{BKRSV20}, but it is not sufficient for $d_1>1$ 
as demonstrated by \cite[Example 4.2]{BKRSV20}. 
Finding other necessary conditions to settle the problem was the main motivation for this paper.

The main result of this paper is the solution to Problem \ref{problem}, obtained using moment theory.
 Section \ref{sec:preliminaries} introduces the notation and tools needed to establish the main results.
The solution to the GQR case of Problem \ref{problem} is
Theorem \ref{thm:main},
while the gGQR case is covered by
Theorem \ref{the:main-gene}. The main technique to establish these results is to observe the appropriate univariate sequence of $\gamma$, associated with the monic polynomial that has zeros exactly in the prescribed nodes. This sequence uniquely determines both the remaining nodes in the potential representing measure and the extension of the original sequence. Then the solution follows by applying the solution to the $\RR$--TMP from \cite{CF91} to the extended sequence. In Examples \ref{ex:no-measure} and \ref{ex:measure} we demonstrate the application of Theorems \ref{thm:main} and \ref{the:main-gene} on numerical examples.

\section{Preliminaries}
\label{sec:preliminaries}

We write $\RR^{n\times m}$ for the set of $n\times m$ real matrices. 
For a polynomial $f\in \RR[\x]$
we denote by $\cZ(f):=\{x\in \RR\colon f(x)=0\}$ its set of zeros.

\subsection{Moment matrix}
Let $D=2d$, $d\in \NN$.
For $\ell\in \NN$, $\ell\leq d$
the Hankel matrix of size $(\ell+1)\times (\ell+1)$ corresponding to the sequence $\gamma$ as in \eqref{230422-1204},
with columns and rows indexed by the monomials $1,X,\ldots,X^\ell$,
is defined by
	\begin{equation}\label{vector-v}
		M_{\ell}:=\left(\gamma_{i+j-2} \right)_{i,j=1}^{\ell+1}
					=	\kbordermatrix{
							& \textit{1} & X & X^2 & \cdots  & X^{\ell} \\
							\textit{1} & \gamma_0 & \gamma_1 & \gamma_2 & \cdots &\gamma_\ell\\
							X & \gamma_1 & \gamma_2 & \iddots & \iddots & \gamma_{\ell+1}\\
							X^2 & \gamma_2 & \iddots & \iddots & \iddots & \vdots\\
							\vdots &\vdots 	& \iddots & \iddots & \iddots & \gamma_{2\ell-1}\\
							X^\ell & \gamma_\ell & \gamma_{\ell+1} & \cdots & \gamma_{2\ell-1} & \gamma_{2\ell}
						}
	\end{equation}
and is called the \textbf{$\ell$--th truncated moment matrix} of $\gamma$.
For $i,j\in \NN\cup\{0\}$, $i\leq d$, $j\leq d$, let 
\begin{equation}
    \label{bold-v}
    \mbf{v}_i^{(j)}:=\left( \gamma_{i+r-1} \right)_{1\leq r\leq j+1}\in \RR^{j+1}.
\end{equation}
Using this notation, we have that
	$$M_d=\left(\begin{array}{ccc} 
				\mbf{v}_0^{(d)} & \cdots & \mbf{v}_d^{(d)}
			\end{array}\right).$$

For $p(x)=\sum_{i=0}^{\ell} a_{i}x^i\in \RR[x]$
we define the \textbf{evaluation} $p(X)$ on the columns of the matrix $M_\ell$ by $p(X)=a_0\textit{1}+\sum_{i=1}^{\ell} a_i X^i$, where $\textit{1}$ and $X^i$ represent the columns of $M_\ell$ indexed by these monomials.
Then $p(X)$ is a vector from the linear span of the columns of $M_\ell$. 
If this vector is the zero one,
then we say $p$ is a \textbf{column relation} of $M_\ell$.

As in \cite{CF91}, the \textbf{rank} of $\gamma$, denoted by $\Rank \gamma$, is defined by
	$$\Rank \gamma=
	\left\{\begin{array}{rl} 
		d+1,&	\text{if } M_d \text{ is nonsingular},\\[0.5em]
		\min\left\{i\colon \mbf{v}_i^{(d)}\in \Span\{\mbf{v}_0^{(d)},\ldots,\mbf{v}_{i-1}^{(d)}\}\right\},&		\text{if } M_d \text{ is singular}.
	 \end{array}\right.$$
If $\Rank \gamma<d+1$, we say that $\gamma$ is \textbf{singular}. Else $\gamma$ is \textbf{nonsingular}.
We call $\gamma$ \textbf{positively recursively generated (prg)} if for  $r=\Rank m$ the following two conditions hold:
\begin{enumerate}
	\item $M_{r-1}$ is positive definite.
	\item\label{def:coef-gen-poly} 
    If $r<d+1$, denoting 
    $(\varphi_0,\ldots,\varphi_{r-1}):=M_{r-1}^{-1} \mbf{v_r^{(r-1)}}$,
		the equality
		\begin{equation}\label{recursive-generation}
			  \gamma_j=\varphi_0\gamma_{j-r}+\cdots+\varphi_{r-1}\gamma_{j-1}
		\end{equation}
		holds for $j=r,\ldots,2d$.
\end{enumerate}
If $\gamma$ is singular and prg, we call 
\begin{equation}
	\label{230422-1303}
		p_{(\gamma)}(\x):=\x^r-\sum_{i=0}^{r-1}\varphi_i \x^i\in \RR[\x],
\end{equation}
with $\varphi_i$ as in 
\eqref{def:coef-gen-poly},
the \textbf{generating polynomial} of $\gamma$.

\subsection{Localizing moment matrices}
\label{Hankel-mat-ext}
Let $\gamma$ be as in \eqref{230422-1204}.
The \textbf{Riesz functional} $L:\RR[x]_{\leq D}\to \RR$ of $\gamma$ is defined by $L(x^i):=\gamma_i$ for each $i$.
For $f\in \RR[x]_{\leq D}$ an \textbf{$f$--localizing moment matrix $\cH_{f}$ of $\gamma$} is a real square matrix of size 
$s(D,f)\times s(D,f)$, where 
    $s(D,f)=\lfloor \frac{D-\deg f}{2}\rfloor +1$,
with the $(i,j)$--th entry equal to $L(fx^{i+j-2})$.
We write
\begin{equation}
\label{def:localized}
    f\cdot \gamma:=(\gamma^{(f)}_0,\gamma^{(f)}_1,\ldots,
        \gamma^{(f)}_{D-\deg f}),
        \quad
        \gamma^{(f)}_i:=L(fx^i).
\end{equation}
Note that $\cH_{f}$ corresponds to the moment matrix of the sequence $f\cdot\gamma$. 
We denote the Riesz functional of $f\cdot \gamma$ by $L_{f}$ and call it an \textbf{$f$--localizing Riesz functional of $\gamma$}.

We write $\cH_f(\ell)$ for the $\ell$-th truncated moment matrix of $f\cdot \gamma$, i.e., 
	\begin{equation*}
    \label{localized-truncated}
		\cH_f(\ell):=\left(\gamma^{(f)}_{i+j-2} \right)_{i,j=1}^{\ell+1}
					=	\kbordermatrix{
							& \textit{1} & X & X^2 & \cdots  & X^{\ell} \\[0.2em]
							\textit{1} & \gamma^{(f)}_0 & \gamma^{(f)}_1 & \gamma^{(f)}_2 & \cdots & \gamma^{(f)}_\ell\\[0.2em]
							X & \gamma^{(f)}_1 & \gamma^{(f)}_2 & \iddots & \iddots & \gamma^{(f)}_{\ell+1}\\[0.2em]
							X^2 & \gamma^{(f)}_2 & \iddots & \iddots & \iddots & \vdots\\[0.2em]
							\vdots &\vdots 	& \iddots & \iddots & \iddots & \gamma^{(f)}_{2\ell-1}\\[0.2em]
							X^\ell & \gamma^{(f)}_\ell & \gamma^{(f)}_{\ell+1} & \cdots & \gamma^{(f)}_{2\ell-1} & \gamma^{(f)}_{2\ell}
						}
	\end{equation*}
    
\subsection{Atomic measures}
For $x\in \RR$, $\delta_x$ stands for the Dirac measure supported on $x$.
By a \textbf{finitely atomic positive measure} on $\RR$ we mean a measure of the form 
	$\mu=\sum_{j=1}^\ell \rho_j \delta_{x_j}$, 
where $\ell\in \NN\cup \{0\}$, each $\rho_j>0$ and each $x_j\in \RR$. The points $x_j$ are called 
\textbf{atoms} of the measure $\mu$ and the constants $\rho_j$ the corresponding \textbf{densities}. 

We now recall the definition of the evaluation at $\infty$ from \cite[Definition 1.1]{BKRSV20}. The \textbf{evaluation at $\infty$} is the linear functional, defined by 
\begin{equation}
\label{def:point-infty}
\textbf{ev}_{\infty}:\mathbb R[\x]_{\leq D}\to \RR,
\quad
\sum_{k=0}^{D}f_k\x^k \mapsto f_D.
\end{equation}
Allowing $\textbf{ev}_{\infty}$ to be a part of a finitely atomic positive measure $\mu$, we call $\mu$ a \textbf{generalized measure}. When such a measure represents a sequence $\gamma$ as in \eqref{230422-1204} by \eqref{moment-measure-cond-univ}, then $\mu$ is called a $(\RR\cup \{\infty\})$--rm for $\gamma$.

\subsection{Solution to the $\RR$--TMP}
Let $x_1,\ldots, x_{m}\in \RR$. We denote by $V_{(x_1,\ldots,x_m)}\in \RR^{m\times m}$ the Vandermondo matrix 
	$$V_{(x_1,\ldots,x_m)}:=
		\left(\begin{array}{cccc}
		1 & 1 & \cdots & 1\\
		x_1 & x_2 & \cdots & x_m\\
		\vdots & \vdots &  & \vdots\\
		x_1^{m-1} & x_2^{m-1} & \cdots & x_m^{m-1}
		\end{array}\right).$$ 
The following is a solution to the $\RR$--TMP of degree $2d$.

\begin{theorem}[{\cite[Theorems 3.9 and 3.10]{CF91}}]\label{Hamburger}
	Let $d\in \NN$ and $\gamma=(\gamma_0,\ldots,\gamma_{2d})\in \RR^{2d+1}$ with $\gamma_0>0$. 
	The following statements are equivalent:
\begin{enumerate}[(1)]	
	\item
		\label{pt1-130222-1851} 
		There exists a $\RR$--representing measure for $\gamma$.
	\item 
		There exists a $(\Rank \gamma)$--atomic $\RR$--representing measure for $\gamma$.
	\item\label{pt3-130222-1851} 	
		$\gamma$ is positively recursively generated.
	\item 
		$M_d$ is positive semidefinite and $\Rank M_d=\Rank \gamma$.
	\item\label{pt4-v2206} 
	One of the following statements holds:
	\begin{enumerate}
		\item $M_d$ is positive definite.
		\item $M_d$ is positive semidefinite
			and 
			$\Rank M_d=\Rank M_{d-1}$.
	 \end{enumerate}
\end{enumerate}
Moreover, if a $\RR$--representing measure for $\gamma$ exists, then:
\begin{enumerate}[(i)]
	\item\label{140222-1158}
	If $r\leq d$, then the $\RR$--representing measure is unique and of the form
	$\mu=\sum_{i=1}^{r}\rho_i\delta_{x_i},$ where $x_1,\ldots,x_r$ are the roots of $p_{(\gamma)}$ 
	as in \eqref{230422-1303}
	and
	$
	(\rho_i)_{i=0}^r=	
	V_{(x_1,\ldots,x_m)}^{-1}\mbf{v}_0^{(r-1)}.	
	$
	\item\label{140222-1202} 
	If $r=d+1$, then there are infinitely many $\RR$--representing measures for $\gamma$. All $(d+1)$--atomic ones
		are obtained by choosing $\gamma_{2k+1}\in \RR$ arbitrarily, defining 
		$\gamma_{2k+2}:=(\mbf{v}_{d+1}^{(d)})^T{(M_d})^{-1}\mbf{v}_{d+1}^{(d)}$, 
		and using
		\eqref{140222-1158} for 
		$\widetilde \gamma:=(\gamma_0,\ldots,\gamma_{2d+1},\gamma_{2d+2})\in \RR^{2k+3}.$ 
\end{enumerate}
\end{theorem}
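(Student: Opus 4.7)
The plan is to establish the cyclic chain $(2)\Rightarrow(1)\Rightarrow(3)\Rightarrow(2)$ together with the linear--algebraic equivalences $(3)\Leftrightarrow(4)\Leftrightarrow(5)$, and then to deduce the structure statements (i) and (ii). The implication $(2)\Rightarrow(1)$ is trivial. For $(1)\Rightarrow(3)$, I would use that if $\mu$ is a $\RR$--rm then $M_d=\int \mbf{v}(x)\mbf{v}(x)^T d\mu(x)$ with $\mbf{v}(x)=(1,x,\ldots,x^d)^T$, which forces $M_d$ to be positive semidefinite and identifies each column relation of $M_d$ with a polynomial vanishing on $\supp(\mu)$. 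Setting $r:=\Rank\gamma$, one argues that $M_{r-1}$ must be positive definite (any nontrivial kernel element would produce a polynomial of degree $<r$ vanishing on $\supp(\mu)$ and thereby force $\mbf{v}_i^{(d)}$ to be in $\Span\{\mbf{v}_0^{(d)},\ldots,\mbf{v}_{i-1}^{(d)}\}$ for some $i<r$, contradicting the definition of $r$). When $r\leq d$, the unique monic column relation of degree $r$ produces $p_{(\gamma)}$, and the recursion \eqref{recursive-generation} for $j>r$ follows by multiplying this relation by $x^{j-r}$ and integrating against $\mu$.

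For $(3)\Rightarrow(2)$ I would split into two cases. In the singular case $r\leq d$, the key sub-step is to prove that $p_{(\gamma)}$ has $r$ pairwise distinct real roots. My plan is to introduce the bilinear form $\langle p,q\rangle:=L(pq)$ on $\RR[x]_{\leq r-1}$, which is positive definite because its Gram matrix in the monomial basis is $M_{r-1}\succ 0$. Passing to the quotient $\RR[x]/(p_{(\gamma)})\cong\RR[x]_{\leq r-1}$, multiplication by $x$ becomes self-adjoint (the Hankel structure translates to symmetry of the matrix representation), so its $r$ eigenvalues, which are exactly the roots of $p_{(\gamma)}$, are real; simplicity comes from the dimension count. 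Having located $x_1,\ldots,x_r\in\RR$, I would define $(\rho_i):=V_{(x_1,\ldots,x_r)}^{-1}\mbf{v}_0^{(r-1)}$ and verify positivity of each $\rho_i$ via the identity $\rho_i=L(\ell_i^2)$ where $\ell_i$ is the Lagrange polynomial at $x_i$, using that $L$ is strictly positive on nonzero squares of degree $\leq r-1$. The measure $\mu=\sum\rho_i\delta_{x_i}$ then reproduces $\gamma_0,\ldots,\gamma_{2r-2}$ by construction and $\gamma_{2r-1},\ldots,\gamma_{2d}$ by the prg recursion, simultaneously proving (2) and (i).

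In the nonsingular case $r=d+1$, the plan is to reduce to the singular case by a flat extension. For any choice of $\gamma_{2d+1}\in\RR$, setting $\gamma_{2d+2}:=(\mbf{v}_{d+1}^{(d)})^T(M_d)^{-1}\mbf{v}_{d+1}^{(d)}$ forces $\Rank M_{d+1}=\Rank M_d=d+1$, so the extended sequence $\widetilde\gamma$ of degree $2d+2$ is singular prg of rank $d+1$. Applying the singular case to $\widetilde\gamma$ yields a $(d+1)$--atomic $\RR$--rm, which in particular represents $\gamma$; varying $\gamma_{2d+1}$ produces distinct such measures, establishing (ii). The equivalences with (4) and (5) are packaged by observing that (1) forces both $M_d\succeq 0$ and $\Rank M_d=\Rank\gamma$, that any positive semidefinite moment matrix with $\Rank M_d=\Rank\gamma$ is automatically prg by a direct linear--algebra check, and that $(5)(b)$ is precisely the classical flat extension condition which, combined with $M_d\succeq 0$, forces $\Rank M_d=\Rank\gamma$.

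The step I expect to be most delicate is proving that $p_{(\gamma)}$ has $r$ simple real roots together with the accompanying positivity of the densities $\rho_i$, since this is where the Hankel/positive--definite structure is genuinely used and where the algebraic passage to the quotient ring must be justified carefully against the Riesz--functional interpretation. Once this core assertion is in place, the remaining implications reduce to bookkeeping with PSD Hankel matrices and to the flat extension trick for the nonsingular case.
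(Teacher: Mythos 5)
This theorem is not proved in the paper at all: it is imported verbatim from Curto--Fialkow \cite{CF91}, so there is no ``paper proof'' to compare against. Measured against the classical argument, your outline follows essentially the standard route: represent $M_d=\int \mbf{v}(x)\mbf{v}(x)^T d\mu$ to get $(1)\Rightarrow(3)$, in the singular case diagonalize multiplication by $\x$ on $\RR[\x]/(p_{(\gamma)})$ with respect to the positive definite form $\langle p,q\rangle=L(pq)$ to get $r$ real roots, recover the densities by Lagrange interpolation ($\rho_i=L(\ell_i)=L(\ell_i^2)>0$), and reduce the nonsingular case to the singular one by the flat extension $\gamma_{2d+2}:=(\mbf{v}_{d+1}^{(d)})^T M_d^{-1}\mbf{v}_{d+1}^{(d)}$. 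All of these steps are sound, and the delicate point you single out (real simple roots plus positive weights) is indeed where the Hankel/positivity structure enters; for simplicity of the roots you should say explicitly that the multiplication operator is cyclic (the class of $1$ generates), hence nonderogatory, so diagonalizability forces distinct eigenvalues --- a bare ``dimension count'' does not give this.

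The place where your proposal is genuinely too quick is the bridge to statements $(4)$ and $(5)$, which is the technical heart of \cite{CF91}. You assert that $M_d\succeq 0$ together with $\Rank M_d=\Rank\gamma$ is ``automatically prg by a direct linear--algebra check,'' and that $(5)(b)$ ``forces $\Rank M_d=\Rank\gamma$.'' Neither is immediate: the column relation $p_{(\gamma)}(X)=0$ only yields \eqref{recursive-generation} for $j=r,\ldots,d+r$, and the naive propagation argument $L((\x^k p_{(\gamma)})^2)=L\big(p_{(\gamma)}\cdot \x^{2k}p_{(\gamma)}\big)=0$ breaks down once $\deg(\x^{2k}p_{(\gamma)})>d$, so positivity alone does not extend the recursion to $j=2d$. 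One genuinely needs the rank hypothesis, e.g.\ via the generalized Schur complement identity $C=B^TM_{r-1}^{-1}B$ for the block decomposition of $M_d$, to push the relation into the lower-right corner; similarly, ruling out a ``premature'' column dependence under the flatness condition in $(5)(b)$ requires an argument, not just the observation that it is the classical flat extension condition. Two smaller points: the constructed measure matches $\gamma_0,\ldots,\gamma_{r-1}$ by construction (not $\gamma_0,\ldots,\gamma_{2r-2}$), with all higher moments up to $2d$ recovered from \eqref{recursive-generation}; and the uniqueness claim in (i) and the exhaustiveness claim in (ii) (that \emph{every} $(d+1)$--atomic measure arises from some choice of $\gamma_{2d+1}$, because $(d+1)$ atoms force the flat value of $\gamma_{2d+2}$) each deserve a sentence rather than being left implicit.
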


\subsection{Companion matrix}	
    For a univariate polynomial
		$p(\x)=\x^k-\sum_{i=0}^{k-1}\lambda_{i}\x^{i}\in \RR[\x]$
	let
	\begin{equation}
	\label{230424-1153}
		C_{p(\x)}=
		\begin{pmatrix}
		0 & \cdots & && 0 & \lambda_0\\
		1 & 0 &&& 0& \lambda_1\\
		0 & 1 & \ddots && \vdots & \vdots \\
		\vdots &\ddots & \ddots& \ddots & \vdots&\vdots \\
		&&\ddots & 1 &0 & \lambda_{k-2}\\
		0& \cdots & \cdots& 0 & 1 & \lambda_{k-1}
		\end{pmatrix} \in \RR^{k\times k}
	\end{equation}
	be its \textbf{companion matrix}, i.e.,
		$p(\x)=\det(\x I_{k}-C_{p(\x)}),$
	where $I_k$ stands for the identity matrix of size $k$.

\subsection{Elementary symmetric polynomials}

Let $d_1\in \NN$ and 
  $$
  e_i(\mathtt{x}_1,\ldots,\mathtt{x}_{d_1}):=
  \sum_{1\leq j_1<j_2<\ldots<j_i\leq d_1}
  \mathtt{x}_{j_1}\mathtt{x}_{j_1}\cdots\mathtt{x}_{j_i}
  $$
  stand for the $i$-th elementary symmetric polynomial in variables 
  $\mathtt{x}_1,\ldots,\mathtt{x}_{d_1}$.
  Given distinct real numbers 
  $x_1,\ldots,x_{d_1}$, 
  let $e_i$ stand for $e_i(x_1,\ldots,x_{d_1})$. In particular,
\begin{equation}
\label{eq:sym-polys}
e_0=1,\quad 
e_1 = \sum_{i=1}^{d_1} x_i, \quad 
e_2 = \sum_{1 \leq i < j \leq d_1} x_i x_j, \quad \dots, \quad e_{d_1} = x_1 x_2 \cdots x_{d_1}.
\end{equation}

\section{Minimal measures with finitely many prescribed atoms}

The main results of this section are solutions to Problem \ref{problem} from Introduction, i.e., Theorem \ref{thm:main} covers the case where only real atoms are allowed in the representing measure, while in Theorem \ref{the:main-gene}
the functional $\textbf{ev}_{\infty}$ (see \eqref{def:point-infty})
is also allowed in the support of the measure.

\begin{theorem}
    \label{thm:main}
	Let $d_1,d_2\in \NN$ 
        and 
        $x_1,\ldots,x_{d_1}\in \RR$ be distinct real numbers. Let $D:=d_1+2d_2-1$. Assume that $\gamma\equiv \gamma^{(D)}=(\gamma_0,\ldots,\gamma_{D})\in\RR^{D+1}$ is a sequence such that 
    the moment matrix	
        $M_{\lfloor\frac{D}{2}\rfloor}$ is positive definite.
    Let $e_i:=e_i^{(d_1)}$ be as in \eqref{eq:sym-polys}
    and
    $$
    f(\x)
    :=\prod_{i=1}^{d_1}(\x - x_i)
    =\sum_{i=0}^{d_1} (-1)^ie_i \x^{d_1-i}.
    $$
	The following statements are equivalent:
	\begin{enumerate}
	\item\label{equi-1}
		There exists a $(d_1+d_2)$--atomic $\RR$--representing measure for $ \gamma$ with $d_1$ atoms equal to $x_1,\ldots,x_{d_1}.$
  
         \item\label{equi-2} 
         The following conditions hold:
         \begin{enumerate} 
            \item\label{equi-2-cond-1}
                The localizing matrix $\cH_f(d_2-1)$
                is invertible. 
            \item\label{equi-2-cond-2} 
                Denote
               \begin{equation}
               \label{def:lambda}
    \begin{pmatrix}
    \lambda_0 & \lambda_1 & \cdots & \lambda_{d_2-1}
    \end{pmatrix}^T
    =
    \big(\cH_f(d_2-1)\big)^{-1}
    \Big(\sum_{i=0}^{d_1}(-1)^i e_i\mathbf{v}^{(d_2-1)}_{d_1+d_2-i}\Big)
                \end{equation}
                where 
                $\mathbf{v}_{d_1+d_2-i}^{(d_2-1)}$ are as in \eqref{bold-v},
                let
                \begin{align}
                \label{def:q-p-theo}
                \begin{split}
                h(\x)
                &:=
                f(\x)\underbrace{\Big(\x^{d_2}-\sum_{i=0}^{d_2-1}\lambda_i \x^i\Big)}_{g(\x)}
                =
                \x^{d_1+d_2}
                -
                \sum_{i=0}^{d_1+d_2-1}
                \varphi_i \x^i,
                \end{split}
                \end{align}
                and let  
                $$\widetilde{\gamma}:=\{\widetilde{\gamma}_u\}_{u=0}^{D+d_1-1}$$
                be the extension of $\gamma$, defined by
                $\widetilde \gamma_u=\gamma_u$ for $0\leq u\leq D$
                and
                \begin{equation}
                \label{eq:new-moments-v2}
                \widetilde \gamma_{D+\ell}
      =
                \sum_{i=1}^{d_1+d_2}
                \varphi_{d_1+d_2-i} \widetilde \gamma_{D+\ell-i}
                \quad\text{for}\quad
                \ell=1,\ldots,d_1-1.
                \end{equation}
                The moment matrix $M_{d_1+d_2-1}$ of $\widetilde{\gamma}$ is positive definite.
                \end{enumerate}
	\end{enumerate}
    
    Moreover, if the equivalent statements \eqref{equi-1},\eqref{equi-2} hold, then
    the other atoms in the measure are the zeros of the polynomial 
    $g(\x)$.
\end{theorem}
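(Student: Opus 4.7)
The plan is to reduce the problem to a classical even-degree $\RR$--TMP for an extended sequence $\bar\gamma$ of degree $2(d_1+d_2)$ and then invoke Theorem \ref{Hamburger}. The guiding heuristic is that if $\mu=\sum_{i=1}^{d_1}\rho_i\delta_{x_i}+\sum_{j=1}^{d_2}\sigma_j\delta_{y_j}$ represents $\gamma$, then $f\cdot\mu$ is supported only at $y_1,\dots,y_{d_2}$ (since $f$ vanishes at each $x_i$), so $\cH_f$ sees precisely the unknown atoms, the polynomial $g$ in \eqref{def:q-p-theo} should be $\prod_j(\x-y_j)$, and $h=fg$ should vanish at every atom of $\mu$. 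Two identities will be used repeatedly. First, expanding $f$ via \eqref{eq:sym-polys} and re-indexing gives $\sum_{i=0}^{d_1}(-1)^i e_i\mathbf{v}_{d_1+d_2-i}^{(d_2-1)}=\mathbf{v}_{d_2}^{(d_2-1)}(f\cdot\gamma)$, which shows that \eqref{def:lambda} is exactly the equation $\cH_f(d_2-1)\lambda=\mathbf{v}_{d_2}^{(d_2-1)}(f\cdot\gamma)$, i.e.\ $g$ is precisely the monic column dependence of a flat extension of $\cH_f(d_2-1)$. Second, $L(\x^{k-1}h(\x))=L_f(\x^{k-1}g(\x))$ for every $k\geq 1$ translates the $k$-th row of the column relation $h(X)=0$ in $M_{d_1+d_2}$ into the $k$-th row of the column relation $g(X)=0$ in $\cH_f$.

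For $(\ref{equi-1})\Rightarrow(\ref{equi-2})$, fix $\mu$ as above, with all $d_1+d_2$ atoms distinct, so $f(y_j)\neq 0$. Condition \eqref{equi-2-cond-1} then follows from the factorization $\cH_f(d_2-1)=VDV^T$, with $V=V_{(y_1,\dots,y_{d_2})}$ invertible and $D=\mathrm{diag}(\sigma_j f(y_j))$ nonsingular. For \eqref{equi-2-cond-2}, extend $\gamma$ to all orders by $\widehat\gamma_u:=\int \x^u\,d\mu$; setting $\tilde g(\x):=\prod_j(\x-y_j)$, the polynomial $\tilde h:=f\tilde g$ vanishes at every atom of $\mu$, so $\widehat\gamma$ satisfies the $\tilde h$-recursion, and $\tilde g$ is a column relation in $\cH_f(d_2)$ built from $\widehat\gamma$. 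The invertibility of $\cH_f(d_2-1)$ makes this monic column relation unique, identifying the $g$ of \eqref{def:q-p-theo} with $\tilde g$ and $h$ with $\tilde h$. Consequently $\widehat\gamma_u=\widetilde\gamma_u$ for all $u\leq D+d_1-1$, and $M_{d_1+d_2-1}(\widetilde\gamma)$ is the moment matrix of the positive $(d_1+d_2)$-atomic measure $\mu$, factoring as $V'D'(V')^T$ with $V'=V_{(x_1,\dots,x_{d_1},y_1,\dots,y_{d_2})}$ a Vandermonde and $D'$ the positive diagonal of densities; hence positive definite.

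For $(\ref{equi-2})\Rightarrow(\ref{equi-1})$, extend $\widetilde\gamma$ by the $h$-recursion to two further entries $\bar\gamma_{2(d_1+d_2)-1}$ and $\bar\gamma_{2(d_1+d_2)}$, obtaining $\bar\gamma$ of degree $2(d_1+d_2)$. Using the two identities above together with \eqref{eq:new-moments-v2} and the definition of the fresh moments, I verify row by row that $h(X)=0$ is a column relation of $M_{d_1+d_2}(\bar\gamma)$: rows $k=1,\dots,d_2$ reduce, via the second identity, to rows of $g(X)=0$ in $\cH_f(d_2)$ and hold by \eqref{def:lambda}; rows $k=d_2+1,\dots,d_1+d_2-1$ are exactly \eqref{eq:new-moments-v2}; the final two rows hold by construction of the fresh moments. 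Thus $M_{d_1+d_2}(\bar\gamma)$ is a flat, hence positive semidefinite, extension of the positive definite $M_{d_1+d_2-1}(\widetilde\gamma)$, and its generating polynomial is $h$. Theorem \ref{Hamburger} then yields a $(d_1+d_2)$-atomic $\RR$--rm for $\bar\gamma$, and therefore for $\gamma$, whose atoms are the real zeros of $h=fg$: the nodes $x_1,\dots,x_{d_1}$ together with the zeros of $g$, which are consequently real. The main obstacle I anticipate is precisely this row-by-row bookkeeping, in which one must track three regimes (rows governed by $\cH_f$, rows imposed by \eqref{eq:new-moments-v2}, and the two rows defining the fresh top moments) and verify their consistent gluing; a secondary subtle point is the uniqueness argument in the forward direction identifying the algorithmic $g$ of \eqref{def:lambda} with $\tilde g=\prod_j(\x-y_j)$.
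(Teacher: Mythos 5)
Your argument is correct, and its overall architecture coincides with the paper's: in both, the heart of the matter is that the prescribed nodes force the polynomial $h=f\cdot g$ with $g$ determined by \eqref{def:lambda}, the sequence is extended by the $h$--recursion up to even degree $2(d_1+d_2)$, and Theorem \ref{Hamburger} is applied to the extended sequence; your backward direction (row-by-row verification that $h(X)=\mbf{0}$ is a column relation of $M_{d_1+d_2}$, flatness over the positive definite $M_{d_1+d_2-1}$, atoms read off from the zeros of $h$) is essentially the paper's. Where you genuinely diverge is in the forward direction. The paper establishes the invertibility of $\cH_f(d_2-1)$ and the identification $g=\widetilde g$ through Lemmas \ref{prod-A-i} and \ref{lem:aux-2}: the product $B_{d_1}M_{d_1+d_2-1}B_{d_1}^T=\sum_{i}(-1)^ie_i\cH_{\x^{d_1-i}f}(d_2-1)$ is invertible because $M_{d_1+d_2-1}$ is positive definite, and the companion-matrix relation $\cH_{\x^{d_1-i}f}(d_2-1)=\cH_{\x^{d_1-i-1}f}(d_2-1)C_{\widetilde g}$ then factors this as $\cH_f(d_2-1)\bigl(\sum_i(-1)^ie_iC_{\widetilde g}^{\,d_1-i}\bigr)$, yielding both invertibility of $\cH_f(d_2-1)$ and $\lambda_i=\widetilde\lambda_i$. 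You instead exploit the measure directly: $\cH_f(d_2-1)=V\,\mathrm{diag}\bigl(\sigma_jf(y_j)\bigr)V^T$ with $V$ the Vandermonde of the unknown nodes (invertible since all $d_1+d_2$ atoms are distinct and $f(y_j)\neq 0$), uniqueness of the solution of $\cH_f(d_2-1)\lambda=\mathbf{v}_{d_2}^{(d_2-1)}(f\cdot\gamma)$ to identify $g$ with $\prod_j(\x-y_j)$, and the factorization $V'D'(V')^T$ to get positive definiteness of $M_{d_1+d_2-1}(\widetilde\gamma)$. This is more elementary and shorter, bypassing both technical lemmas; the price is that it is tied to the existence of the measure, whereas the paper's Lemma \ref{lem:aux-2} expresses the localizing matrices intrinsically through $B_kM_dB_k^T$, a formulation the authors reuse (e.g.\ in the remark following Theorem \ref{the:main-gene}) and which isolates the linear-algebraic content independently of any representing measure.
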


\begin{theorem}
\label{the:main-gene}
Let $d_1,d_2\in \NN$
and 
$x_1,\ldots,x_{d_1}\in \RR$ be distinct real numbers. 
Let $D:=d_1+2d_2-1$. Assume that $\gamma\equiv \gamma^{(D)}=(\gamma_0,\ldots,\gamma_{D})\in\RR^{D+1}$ is a sequence such that 
    the moment matrix	
        $M_{\lfloor\frac{D}{2}\rfloor}$ is positive definite.
    The following statements are equivalent:
 \begin{enumerate}
     \item
     \label{the:main-gene-pt1}
     There exists a $(d_1+d_2)$--atomic $(\RR\cup \{\infty\})$--representing measure $\mu$ for $\gamma$ with $d_1$ atoms equal to $x_1,\ldots,x_{d_1}$.
     \item  
     \label{the:main-gene-pt2}
     One of the following statements hold:
     \begin{enumerate}
     \item
     \label{the:main-gene-pt2a}     
     There exists a $(d_1+d_2)$--atomic $\RR$--representing measure $\mu$
     for $\gamma$
     with $d_1$ atoms equal to $x_1,\ldots,x_{d_1}$, obtained by Theorem \ref{thm:main}.
     \item
     \label{the:main-gene-pt2b}
     There exists a $(d_1+d_2-1)$--atomic $\RR$--representing measure $\mu$ 
     for the sequence
     $\widehat{\gamma}\equiv (\gamma_0,\ldots,\gamma_{D-2})\in\RR^{D-1}$
     with $d_1$ atoms equal to $x_1,\ldots,x_{d_1}$,
     which also represents $\gamma_{D-1}$ and $\gamma_D-\alpha$ for some $\alpha > 0$.
     Namely, 
     denoting $e_i:=e_i^{(d_1)}$ as in \eqref{eq:sym-polys}
    and
    $
    f(\x)
    :=\prod_{i=1}^{d_1}(\x - x_i),
    $
    the following conditions hold:
    \smallskip
     \begin{enumerate} 
            \item\label{point2-1}
                The localizing matrix $\cH_f(d_2-2)$
                is invertible. 
            \item \label{point2-2}
                Denote
               \begin{equation*}
    \begin{pmatrix}
    \lambda_0 & \lambda_1 & \cdots & \lambda_{d_2-2}
    \end{pmatrix}^T
    =
    \big(\cH_f(d_2-2)\big)^{-1}
    \Big(\sum_{i=0}^{d_1}(-1)^i e_i\mathbf{v}^{(d_2-2)}_{d_1+d_2-1-i}\Big)
                \end{equation*}
                where 
                $\mathbf{v}_{d_1+d_2-1-i}^{(d_2-2)}$ are as in \eqref{bold-v},
                let
                \begin{align*}
                \begin{split}
                h(\x)
                &:=
                f(\x)\Big(\x^{d_2-1}-\sum_{i=0}^{d_2-2}\lambda_i \x^i\Big)
                =
                \x^{d_1+d_2-1}
                -
                \sum_{i=0}^{d_1+d_2-2}
                \varphi_i \x^i,
                \end{split}
                \end{align*}
                and let  
                $$\widetilde{\gamma}:=\{\widetilde{\gamma}_u\}_{u=0}^{D+d_1-3}$$
                be defined by
                $\widetilde \gamma_u=\gamma_u$ for $0\leq u\leq D-2$
                and
                \begin{equation*}
                \widetilde \gamma_{D-2+\ell}
      =
                \sum_{i=1}^{d_1+d_2-1}
                \varphi_{d_1+d_2-1-i} \widetilde \gamma_{D-2+\ell-i}
                \quad\text{for}\quad
                \ell=1,\ldots,d_1-1.
                \end{equation*}
                The moment matrix $M_{d_1+d_2-2}$ of $\widetilde{\gamma}$ is positive definite,
                $$\widetilde \gamma_{D-1}=\gamma_{D-1} 
                \quad\text{and}\quad 
                \widetilde\gamma_{D}<\gamma_D.$$
                \end{enumerate}
     \end{enumerate}
 \end{enumerate}
\end{theorem}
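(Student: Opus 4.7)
\medskip
\noindent\textbf{Proof plan.}
The key observation is that $\textbf{ev}_\infty(\x^k)=0$ for $0\le k<D$ and $\textbf{ev}_\infty(\x^D)=1$, so adding a positive multiple $\alpha\,\textbf{ev}_\infty$ to an $\RR$-atomic measure alters only the top moment $\gamma_D$, leaving $\gamma_0,\ldots,\gamma_{D-1}$ unchanged. I would therefore split both directions according to whether $\infty$ belongs to the support of the representing measure and reduce each alternative to Theorem~\ref{thm:main} applied to the shortened sequence $\widehat\gamma=(\gamma_0,\ldots,\gamma_{D-2})$ of degree $D-2=d_1+2(d_2-1)-1$, i.e.\ with $d_2$ replaced by $d_2-1$. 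The positive definiteness hypothesis on $M_{\lfloor D/2\rfloor}$ transfers to $\widehat\gamma$ because the corresponding moment matrix is a principal submatrix of $M_{\lfloor D/2\rfloor}$.

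For \eqref{the:main-gene-pt1}$\Rightarrow$\eqref{the:main-gene-pt2}, decompose the generalized measure as $\mu=\mu'+\alpha\,\textbf{ev}_\infty$ with $\mu'$ a finitely atomic $\RR$-measure and $\alpha\ge 0$. If $\alpha=0$, then $\mu$ is a genuine $(d_1+d_2)$-atomic $\RR$-representing measure and Theorem~\ref{thm:main} yields case \eqref{the:main-gene-pt2a}. If $\alpha>0$, then $\mu'$ is a $(d_1+d_2-1)$-atomic $\RR$-representing measure for $\widehat\gamma$; applying Theorem~\ref{thm:main} to $\widehat\gamma$ with parameter $d_2-1$ produces the invertibility in \eqref{point2-1} and the positive definiteness of $M_{d_1+d_2-2}$ in \eqref{point2-2}. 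The remaining conditions $\widetilde\gamma_{D-1}=\gamma_{D-1}$ and $\widetilde\gamma_D=\gamma_D-\alpha<\gamma_D$ then follow from $\int \x^{D-1}\,d\mu'=\gamma_{D-1}$ and $\int \x^D\,d\mu'=\gamma_D-\alpha$, once one notes that the recursion generated by the polynomial $h$ of degree $d_1+d_2-1$ recovers precisely these moments of $\mu'$, since every atom of $\mu'$ is a zero of $h$.

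For \eqref{the:main-gene-pt2}$\Rightarrow$\eqref{the:main-gene-pt1}, case \eqref{the:main-gene-pt2a} is immediate. In case \eqref{the:main-gene-pt2b}, Theorem~\ref{thm:main} applied to $\widehat\gamma$ with parameter $d_2-1$ produces a $(d_1+d_2-1)$-atomic $\RR$-representing measure $\mu'$ for $\widehat\gamma$ whose prescribed atoms are $x_1,\ldots,x_{d_1}$ and whose remaining atoms are the zeros of $\x^{d_2-1}-\sum_{i=0}^{d_2-2}\lambda_i\x^i$. Since all atoms of $\mu'$ are zeros of $h$, its power moments satisfy the recursion, so the $(D-1)$-st and $D$-th moments of $\mu'$ equal $\widetilde\gamma_{D-1}$ and $\widetilde\gamma_D$ respectively. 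The hypothesis $\widetilde\gamma_{D-1}=\gamma_{D-1}$ then shows $\mu'$ represents $\gamma_{D-1}$, and setting $\alpha:=\gamma_D-\widetilde\gamma_D>0$, the generalized measure $\mu:=\mu'+\alpha\,\textbf{ev}_\infty$ is $(d_1+d_2)$-atomic, contains the prescribed atoms, and represents the full sequence $\gamma$.

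The main bookkeeping point, where care is required, is the identification of the recursion-extended values $\widetilde\gamma_{D-1},\widetilde\gamma_D$ with the actual $(D-1)$-st and $D$-th moments of $\mu'$; this rests on $h$ annihilating every atom of $\mu'$, from which the recursion propagates to the moments by linearity. Once this is in hand, the theorem reduces cleanly to two applications of Theorem~\ref{thm:main}.
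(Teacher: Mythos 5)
Your proposal is correct and follows essentially the same route as the paper: decompose the generalized measure as a real part plus $\alpha\,\textbf{ev}_\infty$, observe that $\textbf{ev}_\infty$ only contributes to the top moment $\gamma_D$, and reduce to Theorem \ref{thm:main} applied to $\widehat\gamma$ with $d_2$ replaced by $d_2-1$. Your explicit verification that the recursion-extended values $\widetilde\gamma_{D-1},\widetilde\gamma_D$ coincide with the corresponding moments of the real part (because $h$ annihilates its atoms) is a detail the paper leaves implicit, but it is the same argument.
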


\begin{remark}
\begin{enumerate}
    \item\label{comment-point1} 
        In the case $d_1=1$, there is nothing to check in \eqref{equi-2-cond-2}
        of Theorem \ref{thm:main} and the invertibility of $\cH_f(d_2-1)$ is equivalent to the existence of a $\RR$--rm. This agrees with Theorem \cite[Theorem 1.4]{BKRSV20}.
        The remaining nodes in \cite[Theorem 1(a)]{BKRSV20} are solutions of $F(x_1,\y)=0$ where
        $F(\Tt,\y):=
        \det{\big(\cH_{(\x-\Tt)(\x-\y)}(d_2-1)\big)}.
        $
        Note that 
        $$\cH_{(\x-\y)f(\x)}(d_2-1)
        =
        \cH_{\x f(\x)}(d_2-1)-\y\cH_{f(\x)}(d_2-1)
        =
        \cH_{f(\x)}(d_2-1)(C_{g(\x)}-\y I),
        $$
        where $C_{g(\x)}$ is the companion matrix of $g(\x)$ from \eqref{def:q-p-theo}. So $F(x_1,y)=0$ indeed corresponds to $g(y)=0$, which should be true 
        by the moreover part of Theorem \ref{thm:main}.
        \smallskip
    \item 
        Assume that $d_1=1$ and $\cH_{f}(d_2-1)$ is singular. 
        Let 
        $\begin{pmatrix}
            \varphi_0 & \cdots & \varphi_{d_{2}-1}
        \end{pmatrix}^T=
        M_{d_2-1}^{-1}\mathbf{v}_{d_2}^{(d_2-1)}
        $
        and $p(\x)=\x^{d_2}-\sum_{i=0}^{d_2-1}\varphi_i \x^i$.
        Since
        $\cH_{\x}(d_2-1)=M_{d_{2}-1}C_{p(\x)}$,
        where $C_{p(\x)}$ is the companion matrix of $p(\x)$, it follows that
        $\cH_{f(\x)}(d_2-1)=M_{d_2-1}(C_{p(\x)}-x_1I)$.
        Hence, $x_1$ is a zero of $p(\x)$ and by Theorem \ref{Hamburger} zeroes
        of $p(\x)$ represent the support of the $d_2$--atomic $\RR$--rm for 
            $(\gamma_0,\ldots,\gamma_{D-1},\gamma_D-M_{d_2}/M_{d_2-1})$,
        where $M_{d_2}/M_{d_2-1}$ stands for the Schur complement of $M_{d_2-1}$ in $M_{d_2}$. 
        Since $\alpha:=\gamma_D-M_{d_2}/M_{d_2-1}$ is positive, the remaining atom is
        $\mathbf{ev}_{\infty}$ with the density $\alpha$.
    \item Note that the representing measures obtained by Theorems \ref{thm:main} and       \ref{the:main-gene} are unique.
    \item Theorem \ref{thm:main} and \ref{the:main-gene} assume $d_1+d_2$ atoms with
        $x_1,\ldots,x_{d_1}$ prescribed are necessary to represent $\gamma$. It might happen that there already exists a $(d_1+i)$--atomic representing measure for some 
        $1\leq i< d_2$. One can also handle these cases by the following simple adaptation. Namely, for a given $i$ Theorem \ref{thm:main} is applied to the truncation $\gamma^{(D_i)}=\{\gamma_{j}\}_{j=0}^{D_i}$ where $D_i=d_1+2i-1$ of $\gamma$.
        Since the measure obtained by Theorem \ref{thm:main} is unique, one only checks whether this measure also represents $\gamma_{D_i+1},\ldots,\gamma_{D-1}$ and $\gamma_{D}-\alpha$ for some $\alpha\geq 0$. If $\alpha>0$, then 
        $\mathbf{ev}_\infty$ must also be added as an atom with the density $\alpha$.
\end{enumerate}
\end{remark}

The following two lemmas are important technical tools in the proof of Theorem \ref{thm:main}.

\begin{lemma}\label{prod-A-i}
    Let $k,d \in \mathbb N$ with $k < d$.
    Given distinct real numbers
    $x_1,\ldots,x_k$,
    write
    \begin{equation*} 
		A_i:=
		\begin{pmatrix}
		-x_i & 1 & 0  & \cdots & 0\\
		0 & -x_i & 1&\ddots & \vdots \\
		\vdots  & \ddots & \ddots &  \ddots & 0 \\
		0 & \cdots & 0 &  -x_i & 1
		\end{pmatrix}\in \RR^{(d-i+1)\times (d-i+2)}
            \quad
            \text{for}\quad i=1,\ldots,k.
   \end{equation*}
   and
    \begin{equation} 
    \label{def:Bk}
    B_k:=A_k A_{k-1} \cdots A_1\in \RR^{(d-k+1)\times (d+1)}.
    \end{equation}
   Let $e_i:=e_i^{(k)}$ be as in \eqref{eq:sym-polys} (with $d_1=k$).
   Then
   \begin{tiny}
    \begin{equation}
    \label{prod-mat}
	    B_k=
		\begin{pmatrix}
		(-1)^ke_k & (-1)^{k-1}e_{k-1} &  \cdots  & (-1)^{k-i}e_{k-i} & \cdots & e_{0} & 0 & \cdots & \cdots & 0 \\[0.2em]
		0 & (-1)^ke_k & (-1)^{k-1}e_{k-1} & \cdots &(-1)^{k-i}e_{k-i} & \cdots & e_0 & 0 &  & 0 \\[0.2em]
		\vdots  & \ddots & \ddots &  \ddots & &\ddots &&\ddots &\ddots &\vdots \\
        \vdots  &  & \ddots &  \ddots &\ddots &&\ddots&&\ddots&0 \\[0.2em]
		0 & \cdots & \cdots & 0 &  (-1)^ke_k & (-1)^{k-1}e_{k-1} &\cdots & (-1)^{k-i}e_{k-i}& \cdots & e_0
		\end{pmatrix}. 
    \end{equation}
    \end{tiny}
    Equivalently, writing 
    $B_k=(b_{ij})_{i,j}$, we have that
    \begin{equation}
    \label{eq-entry-ij}
    b_{ij}=
    \left\{
    \begin{array}{rl}
    (-1)^{k+i-j}e_{k+i-j}, &  1\leq i\leq j \leq i+k,\\[0.2em]
    0, & \text{otherwise}.
    \end{array}
    \right.
    \end{equation}
\end{lemma}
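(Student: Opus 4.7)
I would prove Lemma \ref{prod-A-i} by induction on $k$, exploiting the well-known Pascal-type recursion
$$e_m^{(k)}(x_1,\ldots,x_k) = e_m^{(k-1)}(x_1,\ldots,x_{k-1}) + x_k\, e_{m-1}^{(k-1)}(x_1,\ldots,x_{k-1})$$
obtained by splitting the monomials of $e_m^{(k)}$ according to whether they contain $x_k$.

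The base case $k=1$ is immediate: $B_1 = A_1$, and \eqref{eq-entry-ij} reads $b_{ii} = -x_1 = (-1)^1 e_1$ and $b_{i,i+1} = 1 = (-1)^0 e_0$, which matches the definition of $A_1$. For the inductive step, I would use the factorization $B_k = A_k B_{k-1}$ (noting the dimensions $A_k\in\RR^{(d-k+1)\times(d-k+2)}$ and $B_{k-1}\in\RR^{(d-k+2)\times(d+1)}$ agree). Since $A_k$ has only two nonzero entries per row, namely $(A_k)_{i,i}=-x_k$ and $(A_k)_{i,i+1}=1$, matrix multiplication collapses to
$$(B_k)_{ij} = -x_k\, (B_{k-1})_{i,j} + (B_{k-1})_{i+1,j}.$$

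Now I substitute the inductive hypothesis into this identity. Setting $m := k+i-j$, for $i\leq j\leq i+k$ the right-hand side becomes
$$-x_k\,(-1)^{m-1} e^{(k-1)}_{m-1} + (-1)^{m} e^{(k-1)}_{m} = (-1)^{k+i-j}\bigl(x_k\, e^{(k-1)}_{m-1} + e^{(k-1)}_{m}\bigr) = (-1)^{k+i-j} e^{(k)}_m,$$
by the Pascal recursion, which is exactly \eqref{eq-entry-ij}. The boundary cases $j=i$ and $j=i+k$ are handled by interpreting $(B_{k-1})_{i+1,i}=0$ (since $j<i+1$) and $(B_{k-1})_{i,i+k}=0$ (since $j>i+k-1$ exceeds the inductive band), which are consistent with $e^{(k-1)}_{k}=0$ and $e^{(k-1)}_{-1}=0$; thus the recursion still produces $(-1)^{k} e^{(k)}_k = (-1)^k x_1\cdots x_k$ and $(-1)^0 e_0 = 1$ respectively. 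Outside $i\leq j\leq i+k$, both summands on the right are zero by the inductive hypothesis, yielding $(B_k)_{ij}=0$.

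The display matrix in \eqref{prod-mat} is simply the pictorial rendering of \eqref{eq-entry-ij}, showing that each row of $B_k$ is a shifted copy of the coefficient vector $\bigl((-1)^k e_k, (-1)^{k-1}e_{k-1}, \ldots, e_0\bigr)$ of the polynomial $f(\x)=\prod_{i=1}^k(\x-x_i)$. I do not anticipate any real obstacle: the entire argument reduces to the Pascal recursion for elementary symmetric polynomials combined with bookkeeping of the bidiagonal structure of $A_k$. The only point requiring a small amount of care is aligning the index shift $i \mapsto i+1$ inside $B_{k-1}$ when tracking the band, which is the reason for the appearance of $k+i-j$ (rather than $k-1+i-j$) in the sign and subscript of $e$ after one step of the product.
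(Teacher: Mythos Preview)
Your proof is correct and follows essentially the same route as the paper: induction on $k$ via the factorization $B_k=A_kB_{k-1}$, reducing the entrywise identity to the Pascal recursion $e_m^{(k)}=e_m^{(k-1)}+x_k e_{m-1}^{(k-1)}$. You are in fact slightly more explicit than the paper about the boundary cases $j=i$ and $j=i+k$, but the argument is otherwise identical.
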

\begin{proof}
    We prove the result inductively on $k.$ 
    The basis case $k=1$ is clear.
    Now, assume that \eqref{prod-mat} holds for $k-1<d-1$ and prove it for $k<d$.
    Namely, assuming that
  $$
  B_{k-1}=(b_{ij}^{(k-1)})_{i,j}\in \RR^{(d-k+2)\times (d+1)}
  $$
  with
    \begin{equation*}
    b_{ij}^{(k-1)}=
    \left\{
    \begin{array}{rl}
    (-1)^{k-1+i-j}e^{(k-1)}_{k-1+i-j}, &  1\leq i\leq j \leq i+k-1,\\[0.2em]
    0, & \text{otherwise},
    \end{array}
    \right.
    \end{equation*}
 we have to prove that
 $B_k=A_kB_{k-1}$ is of the form \eqref{prod-mat} or equivalently \eqref{eq-entry-ij} holds.
 We have that
 \begin{align*}
    \begin{split}
    b_{ij}
    &=
    \begin{pmatrix}
        \smash[b]{\block{i-1}} & -x_k & 1 & \smash[b]{\block{d-k-i}} 
    \end{pmatrix}
    \begin{pmatrix}
        (-1)^{k-j}e_{k-j}^{(k-1)} \\[0.2em]
        (-1)^{k-j+1}e_{k-j+1}^{(k-1)}  \\[0.2em]
        \vdots \\[0.2em]
        (-1)^{k-j+d-k}e_{d-j-1}^{(k-1)}
    \end{pmatrix}\\
    &=(-1)^{k+i-j-1}(-x_k)e_{k-j+i-1}^{(k-1)}+(-1)^{k+i-j}e_{k-j+i}^{(k-1)}\\[0.2em]
    &=(-1)^{k+i-j}(x_ke_{k-j+i-1}^{(k-1)}+e_{k-j+i}^{(k-1)})\\[0.2em]
    &=(-1)^{k+i-j}e_{k+i-j}^{(k)}.
    \end{split}
 \end{align*}
 This completes the proof.
\end{proof}
\allowdisplaybreaks

\begin{lemma}
    \label{lem:aux-2}
    Let $k,d \in \mathbb N$ with $k < d$.
    Let $\gamma\equiv (\gamma_0,\gamma_1,\ldots,\gamma_{2d})\in \RR^{2d+1}$ with a corresponding moment matrix $M_d$.
    Given distinct real numbers
    $x_1,\ldots,x_k$, let $e_i:=e_i^{(k)}$ be as in \eqref{eq:sym-polys} (with $d_1=k$)
    and
    $
    f(\x)
    :=\prod_{i=1}^{k}(\x - x_i)=\sum_{i=0}^{k}(-1)^i e_i\x^{k-i}.
    $
    Let $B_k$ be as in \eqref{def:Bk}
    and $\gamma^{(f)}_i$ as in \eqref{def:localized}.
    Then
        $$(B_kM_{d})_{ij}=\gamma^{(f)}_{i+j-2}
        \quad 
        \text{for}
        \quad 
        1\leq i\leq d-k+1,\;
        1\leq j\leq d+1,
        $$
    and
        $$
        B_kM_dB_k^T
        =
        \sum_{i=0}^{k}
        (-1)^{i}e_i\cH_{\x^{k-i}f(\x)}(d-k).
        $$
        
    Moreover, if $M_d$ is positive definite, 
    then $B_kM_dB_k^T$ is invertible.
\end{lemma}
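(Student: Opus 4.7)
The plan is to reinterpret $B_k$ as the coefficient matrix of the polynomials $f(\x),\x f(\x),\ldots,\x^{d-k}f(\x)$, after which all three assertions become short manipulations with the bilinear form associated to $M_d$.

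First I would unpack Lemma~\ref{prod-A-i}. For a fixed row index $i$, substituting $n=j-i$ in \eqref{eq-entry-ij} gives the nonzero entries $b_{i,i+n}=(-1)^{k-n}e_{k-n}$ for $n=0,\ldots,k$. Reading the $\ell$-th column of $B_k$ as the coefficient of $\x^{\ell-1}$, these are exactly the coefficients of $\x^{i-1}f(\x)$, since the given expansion of $f$ can be rewritten (via $n=k-i$) as $f(\x)=\sum_{n=0}^{k}(-1)^{k-n}e_{k-n}\x^{n}$. Equivalently, $B_k^{T}$ is the monomial-basis matrix of the multiplication map $p\mapsto fp$ from $\RR[\x]_{\leq d-k}$ to $\RR[\x]_{\leq d}$.

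Once this identification is in place, both formulas follow from the basic identity $\mathbf{p}^{T}M_d\mathbf{q}=L(pq)$ for coefficient vectors $\mathbf{p},\mathbf{q}$ of $p,q\in\RR[\x]_{\leq d}$. For the first formula I apply this identity to the $i$-th row of $B_k$ (representing $\x^{i-1}f(\x)$) and the $j$-th standard basis vector (representing $\x^{j-1}$), obtaining $(B_kM_d)_{ij}=L\bigl(f(\x)\,\x^{i+j-2}\bigr)=\gamma^{(f)}_{i+j-2}$. For the second formula the same identity on both sides yields $(B_kM_dB_k^{T})_{ij}=L\bigl(f(\x)^{2}\x^{i+j-2}\bigr)$; then expanding only one of the two factors of $f$ as $f(\x)=\sum_{r=0}^{k}(-1)^{r}e_r\x^{k-r}$ and pulling the constants out of $L$ gives precisely $\sum_{r=0}^{k}(-1)^{r}e_r\bigl(\cH_{\x^{k-r}f(\x)}(d-k)\bigr)_{ij}$.

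For the moreover part, $B_kM_dB_k^{T}$ is the congruence of $M_d$ by $B_k^{T}$, so $M_d\succ 0$ renders it positive semidefinite and invertibility reduces to $B_k$ having full row rank $d-k+1$. Under the identification above, the rows of $B_k$ are the coefficient vectors of the polynomials $f(\x),\x f(\x),\ldots,\x^{d-k}f(\x)$, which have pairwise distinct degrees $k,k+1,\ldots,d$ and are therefore linearly independent. The only genuinely substantive step is the polynomial reinterpretation of the rows of $B_k$ at the start; after that, every assertion of the lemma is a one-line bilinear-form computation, and I expect no real obstacle.
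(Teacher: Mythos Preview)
Your proof is correct and follows the same underlying idea as the paper: both compute the entries of $B_kM_d$ and $B_kM_dB_k^T$ via the Riesz functional $L$. The paper does this by explicit entry-by-entry index manipulation (unwinding $\sum_\ell b_{i\ell}\gamma_{j+\ell-2}$ and then reindexing twice to recognize $f$), whereas you first observe that the $i$-th row of $B_k$ is the coefficient vector of $\x^{i-1}f(\x)$ and then invoke the bilinear identity $\mathbf{p}^T M_d\mathbf{q}=L(pq)$. This packaging is cleaner and makes both matrix identities essentially one-liners. The one place your argument is genuinely different is the rank of $B_k$: the paper argues by a case split on whether some $x_i$ vanishes and inspects which diagonal of $B_k$ is nonzero, while your observation that the rows represent polynomials of pairwise distinct degrees $k,k+1,\ldots,d$ gives linear independence immediately and avoids the case analysis.
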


\begin{proof}
By definition of $\gamma^{(f)}_{i}$, we have to prove that
\begin{equation}
\label{eq:localizing}
    (B_kM_d)_{ij}=L(f\x^{i+j-2})
    \quad
    \text{for each}\quad 1\leq i\leq d-k+1,\; 1\leq j\leq d+1,
\end{equation}
where $L$ is a Riesz functional of $\gamma$.
Let $\mbf{v}_i^{(j)}:=\left( \gamma_{i+r-1} \right)_{1\leq r\leq j+1}$
and let $L$ be the Riesz functional of $\gamma$.
We have that:
\begin{align*}
    (B_kM_d)_{ij}
    &=
        \big(B_k
        \begin{pmatrix} 
            \mathbf{v}_{0}^{(d)} &
            \cdots
            \mathbf{v}_{j-1}^{(d)} &
            \cdots &
            \mathbf{v}_{d}^{(d)} &
        \end{pmatrix}\big)_{ij}\\
    &=
        \sum_{\ell=1}^{d+1} b_{i\ell}\gamma_{j+\ell-2}\\
    &\underbrace{=}_{\eqref{eq-entry-ij}}
        \sum_{\ell=i}^{i+k} (-1)^{k+i-\ell} e_{k+i-\ell}\gamma_{j+\ell-2}\\
    &=
        L\Big(\sum_{\ell=i}^{i+k} (-1)^{k+i-\ell} e_{k+i-\ell}\x^{j+\ell-2}\Big)\\
    &=
        L\Big(\Big(\sum_{\ell=i}^{i+k} (-1)^{k+i-\ell} e_{k+i-\ell}\x^{\ell-i}\Big)
        \x^{i+j-2}\Big)\\
    &=^6 L\Big(\Big(\sum_{\widehat\ell=0}^{k} (-1)^{k-\widehat\ell} e_{k-\widehat\ell}\;\x^{\widehat\ell}\Big)
        \x^{i+j-2}\Big)\\
    &=^7 L\Big(\Big(\sum_{\widetilde\ell=0}^{k} (-1)^{\widetilde\ell} e_{\widetilde\ell}\; \x^{k-\widetilde\ell}\Big)
        \x^{i+j-2}\Big)\\
    &=
        L(f\x^{i+j-2}),
\end{align*}
where in the sixth (resp.\ the seventh) equality we introduced a new variable $\widehat\ell=\ell-i$ (resp.\ $\widetilde \ell= k-\widehat\ell$).

Further on,
\begin{align*}
    B_kM_dB_k^T
    &=(\gamma^{(f)}_{i+j-2})_{  
        \substack{
            1\leq i\leq d-k+1,\\
            1\leq j\leq d+1
            }} B_k^T\\
    &=\sum_{\ell=0}^{k}
        (-1)^{k-\ell} e_{k-\ell} 
        (\gamma^{(f)}_{i+j-2+\ell})_{  
            i,j=1
            }^{d-k+1}\\
    &=\sum_{\ell=0}^{k}
        (-1)^{k-\ell} e_{k-\ell}
            \cH_{\x^{\ell}f(\x)}(d-k)\\
     &=\sum_{i=0}^{k}
        (-1)^{i} e_{i}
            \cH_{\x^{k-i}f(\x)}(d-k),
\end{align*}
where we introduced a new variable $i=k-\ell$ in the last equality.

    The moreover part follows by noticing that $\Rank B_k=d-k+1$.
    Indeed, if all $x_i$ are nonzero, then $e_k\neq 0$. So the first diagonal of $B_k$ is nonzero and $\Rank B_k=d-k+1$. 
    If one of $x_i$ is 0, then we may assume that $x_1=0$ and hence 
    $e_{k-1}=x_2\cdots x_k\neq 0$. Thus the second diagonal of $B_k$ is nonzero
    and $\Rank B_k=d-k+1$.
    Now for $v\in \RR^{d-k+1}$ we have	
	\begin{align*}
		B_kM_{d}B_k^Tv=0
		\quad&\Rightarrow\quad
		v^TB_kM_{d}B_k^Tv=
		(B_k^Tv)^TM_{d}(B_k^Tv)=0\\
		&\Rightarrow\quad
		B_k^Tv=0\\
		&\Rightarrow\quad
		v=0,
	\end{align*}
	where we used that $M_{d}$ is positive definite in the second and the fact that $B_k^T$ has a trivial kernel (being of full column rank) in the last line.
    Therefore $B_kM_{d}B_k^T$ is invertible.
\end{proof}

Now we are ready to prove Theorem \ref{thm:main}.
    \begin{proof}[Proof of Theorem \ref{thm:main}]
    First, we prove the implication $\eqref{equi-1}\Rightarrow	\eqref{equi-2}$.
	Let 
		$\mu=\sum_{i=1}^{d_1+d_2}\rho_i\delta_{y_i}$
	be a $(d_1+d_2)$--atomic $\RR$--rm for $\gamma$ with $\rho_i>0$ and $y_i=x_i$ for $i= 1, \ldots, d_1$.
	We extend $M_{\lfloor\frac{D}{2}\rfloor}$ to $M_{d_1+d_2}$ by computing $\gamma_{D+1},\gamma_{D+2},\ldots, \gamma_{D+d_1+1}$ with respect to the measure  $\mu$:
    \begin{equation*}
	M_{d_1+d_2}=
	\kbordermatrix{
		& \textit{1} & X & \cdots & X^{\lfloor\frac{D}{2}\rfloor}  &\cdots & X^{d_1+d_2} \\
	(\vec{X})^T &
		\mathbf{v}_0^{(d_2-1)} & \mathbf{v}_1^{(d_2-1)} & \cdots & \mathbf{v}_{\lfloor\frac{D}{2}\rfloor}^{(d_2-1)} & \cdots & \mathbf{v}_{d_1+d_2}^{(d_2-1)}\\
	X^{d_2} &
		\gamma_{d_2} & \gamma_{d_2+1} & \cdots & \gamma_{\lfloor\frac{D}{2}\rfloor+d_2}&\cdots & \gamma_{D+1}  \\
  \vdots & \vdots &\vdots  &  &\vdots &\ddots & \vdots \\
  X^{d_1+d_2} &
		\gamma_{d_1+d_2} & \gamma_{d_1+d_2+1} & \cdots & \gamma_{\lfloor\frac{D}{2}\rfloor+d_1+d_2}  &\cdots & \gamma_{D+d_1+1}
	},
	\end{equation*}
	where 
	$\vec{X}
	=
	\begin{pmatrix}
		\textit{1} & X & \cdots & X^{d_2-1}
	\end{pmatrix}.
	$
 Let
	\begin{align}
    \label{def-q-p}
    \begin{split}
		\widetilde g(\x)
		&:=\prod_{i=d_1+1}^{d_1+d_2}(\x-y_i)
		=\x^{d_2}-\sum_{i=0}^{d_2-1}\widetilde\lambda_i \x^i,\\
		\widetilde h(\x)
		&:= \prod_{i=1}^{d_1+d_2}(\x-y_i)=
                \Big(\prod_{i=1}^{d_1}(\x - x_i)\Big)\widetilde g(\x)\\
            &=\Big(\sum_{i=0}^{d_1} (-1)^i e_i \x^{d_1-i}\Big)
                    \Big(\x^{d_2}-\sum_{i=0}^{d_2-1}\widetilde\lambda_i \x^i\Big)\\
            &=\Big(\sum_{i=0}^{d_1} (-1)^i e_i \x^{d_1-i}\Big)
                    \x^{d_2}-
              \Big(\sum_{i=0}^{d_1} (-1)^i e_i \x^{d_1-i}\Big)\Big(\sum_{i=0}^{d_2-1}\widetilde\lambda_i \x^i\Big).
    \end{split}
	\end{align}
We will prove that $\widetilde g(\x)=g(\x)$ and $\widetilde h(\x)=h(\x)$, 
    where $g(\x)$ and $h(\x)$ are as in \eqref{def:q-p-theo}.
    By Theorem \ref{Hamburger}, 
$M_{d_1+d_2-1}$ is invertible and $\widetilde h(X)=\mbf{0}$ in $M_{d_1+d_2}$.
     By Lemma \ref{lem:aux-2} used for $k=d_1$ and $d:=d_1+d_2-1$,
    we get that
    \begin{equation}
        \label{invertibility} 
        \sum_{i=0}^{d_1}
        (-1)^{i}e_i\cH_{\x^{d_1-i}f(\x)}(d_2-1)
        \quad\text{is invertible}.
    \end{equation}
Let $C_{\widetilde g(\x)}$ is the companion matrix of $\widetilde g(\x)$ (see \eqref{230424-1153}).
    From the third equality for $\widetilde h(\x)$ in 
    \eqref{def-q-p}
    above it follows that  
    \begin{equation}
        \label{rel:companion}
            \cH_{x^{d_1-i}f(\x)}(d_2-1)
            =\cH_{x^{d_1-i-1}f(\x)}(d_2-1)C_{\widetilde g(\x)}
            \quad\text{for}\quad
            i=0,\ldots,d_1-1.
    \end{equation}
    Using \eqref{rel:companion} in \eqref{invertibility}, it follows that
    \begin{equation*} 
        \cH_{f}(d_2-1)
        \left(
        \sum_{i=0}^{d_1}
        (-1)^{i}e_i C_{\widetilde g(\x)}^{d_1-i}
        \right)
        \quad\text{is invertible}.
    \end{equation*}
    In particular, $\cH_f(d_2-1)$ is invertible. \eqref{rel:companion} used for 
    $i=d_1-1$, implies that $\widetilde\lambda_i=\lambda_i$ for each $i$, where $\lambda_i$ are as in \eqref{def:lambda}. This proves that $g(\x)=\widetilde g(\x)$ and  
    $h(\x)=\widetilde h(\x)$. Finally, observing the $d_1-1$ entries in rows 
    $X^{d_2},\ldots,X^{d_1+d_2-2}$ of the column $X^{d_1+d_2}$ of $M_{d_1+d_2}$ gives equalities \eqref{eq:new-moments-v2}.
     This
concludes the proof of the implication $\eqref{equi-1}\Rightarrow	\eqref{equi-2}$.\\

It remains to prove the implication \eqref{equi-2}$\Rightarrow$ \eqref{equi-1}.
We define $\gamma_{D+d_1}$, $\gamma_{D+d_1+1}$ by \eqref{eq:new-moments-v2} for $\ell=d_1,d_1+1$.
By assumptions of the theorem, the moment matrix $M_{d_1+d_2}$
satisfies the column relation 
    $$X^{d_1+d_2}-\sum_{i=0}^{d_1+d_2-1}\varphi_i X^i=\mbf{0}.$$
Since $M_{d_1+d_2-1}$ is positive definite,  Theorem \ref{Hamburger} implies that there is a $(d_1+d_2)$--atomic $\RR$--rm for $\gamma$ supported on the zeroes of this column relation. These are $x_1,\ldots,x_{d_1}$ and the zeroes of $g(\x)$.
\end{proof}

Finally, we prove Theorem \ref{the:main-gene}.

\begin{proof}[Proof of Theorem \ref{the:main-gene}]
The nontrivial implication is $\eqref{the:main-gene-pt1}\Rightarrow	\eqref{the:main-gene-pt2}$. We only need to prove that if \eqref{the:main-gene-pt2a} does not hold, then \eqref{the:main-gene-pt2b} holds. Assume that
\eqref{the:main-gene-pt2a} does not hold. Then one of the atoms in a $(d_1+d_2)$--atomic $(\RR\cup \{\infty\})$--rm $\mu$ for $\gamma$ must be $\infty$.
Hence, $\mu=\sum_{j=1}^{d_1+d_2-1}\rho_j \delta_{y_j}+\rho_{d_1+d_2}\textbf{ev}_{\infty}$
for some $\rho_j>0$ and $y_j\in \RR$ with $y_1=x_1,\ldots,y_{d_1}=x_{d_1}$. Thus, we have
    \begin{equation*}
        \displaystyle
        \gamma_i
        =\sum_{j=1}^{d_1+d_2-1}\rho_jy^i_j +\rho_{d_1+d_2}\textbf{ev}_{\infty}(x^i)
        =
        \displaystyle
        \left\{
        \begin{array}{rl}
        \displaystyle\sum_{j=1}^{d_1+d_2-1}\rho_jy^i_j,& \text{if }i<D,\\[0.3em]
        \displaystyle\sum_{j=1}^{d_1+d_2-1}\rho_jy^i_j +\rho_{d_1+d_2},& \text{if }i=D.
        \end{array}
        \right.
    \end{equation*}
Write
$$
\gamma=\underbrace{(\gamma_0,\gamma_1,\ldots,\gamma_{D-2},0,0)}_{(\widehat{\gamma},0,0)}+(\underbrace{0,\ldots,0}_{D-1},\gamma_{D-1},\rho_{d+1}).
$$
Since $\widehat{\gamma}$ has a unique $(d_1+d_2-1)$--atomic $\RR$--rm $\sum_{j=1}^{d_1+d_2-1}\rho_j \delta_{y_j}$ such that $y_j=x_j$ for $j=1,\ldots,d_1$, 
the statement \eqref{the:main-gene-pt2b} follows by Theorem \ref{thm:main}.
\end{proof}
The next numerical example (\url{https://github.com/ZalarA/Quadratures}, obtained using \cite{Wol}) shows the violation of the conditions in Theorems \ref{thm:main} and \ref{the:main-gene}, which prevents the existence of the measure for a given sequence of degree 9 with two prescribed atoms. This is
\cite[Example 2]{BKRSV20}, where the authors show that the necessary condition $\det(\cH_{(\x-y)f(\x)}(d_2-1))=0$ for the existence of a measure containing  $x_1,\ldots,x_{d_1}$ and $y$ in the support is not also sufficient (see \cite[Proposition 4.1]{BKRSV20}).

\begin{example}
    \label{ex:no-measure}
    Let $\gamma\equiv \{\gamma_i\}_{i=0}^9\in \RR^{10}$
    be a sequence defined by $\gamma_i=i!$. Namely, 
        $$\gamma=(1,1,2,6,24,120,720,5040,40320,362880).$$
    Let $x_1=\frac{1}{3}$, $x_2=11$ and $d_1=2, d_2=4$.
    The question is whether there is a $(d_1+d_2)$--atomic $(\RR\cup \{\infty\})$--rm for 
    $\gamma$ with two atoms equal to $x_1$, $x_2$. 
    
    First we will demonstrate which condition in Theorem \ref{thm:main} is violated,
    preventing the existence of a
    $6$--atomic $\RR$--rm for $\gamma$ having atoms $x_1,x_2$ in the support.
    Using the notation of Theorem \ref{thm:main}, let 
    $f(\x)=(\x-\frac{1}{3})(\x-11)$ and
    $$
    \cH_f(3)=
    \begin{pmatrix}
        -\frac{17}{3} & -13 & -\frac{110}{3} & -130 \\[0.2em]
        -13 & -\frac{110}{3} & -130 & -552 \\[0.2em]
        -\frac{110}{3} & -130 & -552 & -2680 \\[0.2em]
        -130 & -552 & -2680 & -14160
     \end{pmatrix}.
    $$
    The eigenvalues of $\cH_f(3)$ are 
        $-14691.9, -61.04, -1.54, 0.18$ 
    and hence $\cH_f(3)$ is invertible. The last column of $\cH_f(4)$, restricted to the first 4 rows, is
    $$v=
    \begin{pmatrix}
    -552 & -2680 & -14160 & -75600    
    \end{pmatrix}^T.
    $$
    Hence,
    $
    \big(H_f(3)\big)^{-1}v=
    \begin{pmatrix}
    -\frac{46998216}{137503},\frac{41197920}{137503},-\frac{11282760}{137503},\frac{1695024}{137503}
    \end{pmatrix}^T.
    $
    So 
        $$g(\x)=\x^4-\frac{1695024 }{137503}\x^3+\frac{11282760 }{137503}x^2-\frac{41197920 }{137503}x+\frac{46998216}{137503}$$
    and $h(\x)=f(\x)g(\x)=\x^6-\sum_{i=0}^5\varphi_i \x^i$ is equal to
    $$
    h(\x)
    =
    \x^6-
    \frac{9760174 }{412509}\x^5+
    \frac{92991629 }{412509}\x^4-
    \frac{175284288 }{137503}x^3
    +\frac{555278096 }{137503}\x^2-
    \frac{683705488}{137503}\x+
    \frac{172326792}{137503}.
    $$
    The candidate for $\gamma_{10}$, coming from the potential measure for $\gamma$,
    is
    $$
    \gamma_{10}
    =
    \sum_{i=1}^{6}\varphi_{6-i} \gamma_{10-i}
    =\frac{492324551232}{137503}.
    $$
    The extended moment matrix $M_5$ is then equal to
    $$
    M_5
    =
    \begin{pmatrix}
        1 & 1 & 2 & 6 & 24 & 120 \\[0.2em]
        1 & 2 & 6 & 24 & 120 & 720 \\[0.2em]
        2 & 6 & 24 & 120 & 720 & 5040 \\[0.2em]
        6 & 24 & 120 & 720 & 5040 & 40320 \\[0.2em]
        24 & 120 & 720 & 5040 & 40320 & 362880 \\[0.2em]
        120 & 720 & 5040 & 40320 & 362880 & \frac{492324551232}{137503}
    \end{pmatrix}.
    $$
    Its eigenvalues are 
    $3.62\cdot 10^6, 3774.42, 14.69, 0.808, 0.0398, \mathbf{-0.436}$.
    Since the smallest one is negative, $M_5$ is not positive definite, which is needed for the existence of the measure.

    Finally we will show that also if we allow the atom at $\infty$ there is still no 
    $6$-atomic $(\RR\cup \{\infty\})$--rm for $\gamma$ with atoms $x_1,x_2$ in the support.
    It remains to consider the case when $\infty$
    is necessarily one of the atoms. So we have to find a candidate for a 5--atomic $\RR$--rm with $x_1,x_2$
    as atoms for the truncated sequence $\widehat \gamma=\{\gamma_i\}_{i=0}^7$ and then check whether the measure obtained also represents $\gamma_8$ and if its moment of degree 9 is strictly smaller than $\gamma_9$. 
    We have that
    $$
    \cH_f(2)=
    \begin{pmatrix}
        -\frac{17}{3} & -13 & -\frac{110}{3}  \\[0.2em]
        -13 & -\frac{110}{3} & -130  \\[0.2em]
        -\frac{110}{3} & -130 & -552 
     \end{pmatrix}.
    $$
    The eigenvalues of $\cH_f(2)$ are 
        $-585.52, -8.76, -0.054$ 
    and hence $\cH_f(2)$ is invertible. The last column of $\cH_f(3)$, restricted to the first 3 rows, is
    $$v_2=
    \begin{pmatrix}
    -130 & -552 & -2680    
    \end{pmatrix}^T.
    $$
    Hence,
    $
    \big(H_f(2)\big)^{-1}v_2=
    \begin{pmatrix}
    \frac{204774}{1861} & -\frac{172062}{1861} & \frac{35955}{1861}
    \end{pmatrix}^T.
    $
    So 
        $$g_2(\x)=\x^3-\frac{35955}{1861}\x^2+\frac{172062}{1861}\x-\frac{204774}{1861}$$
    and $h_2(\x)=f(\x)g_2(\x)=\x^5-\sum_{i=0}^4\widetilde \varphi_i \x^i$ is equal to
    $$
    h_2(\x)
    =
    \x^5-
    \frac{171139 }{5583}\x^4+
    \frac{1759127 }{5583}\x^3-
    \frac{2286645 }{1861}\x^2+
    \frac{2951666 }{1861}\x-
    \frac{750838}{1861}.
    $$
    The candidate for $\widehat \gamma_{8}$, coming from the potential measure for $\widehat \gamma$,
    is
    $$
    \widehat \gamma_{8}
    =
    \sum_{i=1}^{5}\widetilde\varphi_{5-i} \gamma_{8-i}
    =\frac{73385484}{1861}.
    $$
    Already at this point we see that $\widehat \gamma_8\neq \gamma_8$, so the condition of Theorem \ref{the:main-gene} is violated, but let us also compute $M_4$ to see that even the measure for $\widehat \gamma$ does not exist.
    The extended moment matrix $M_4$ is then equal to
    $$
    M_4
    =
    \begin{pmatrix}
        1 & 1 & 2 & 6 & 24  \\[0.2em]
        1 & 2 & 6 & 24 & 120  \\[0.2em]
        2 & 6 & 24 & 120 & 720 \\[0.2em]
        6 & 24 & 120 & 720 & 5040  \\[0.2em]
        24 & 120 & 720 & 5040 &  \frac{73385484}{1861}
    \end{pmatrix}.
    $$
    Its eigenvalues are 
    $40092.4, 86.03, 1.69, 0.25, \mathbf{-0.03}$.
    Since the smallest one is negative, $M_4$ is not positive definite, which is needed for the existence of the measure.
\end{example}

Replacing $x_1=\frac{1}{3}$ with $x_1=1$ in the previous example, there exist a $6$--atomic $\RR$--rm for $\gamma$ with two atoms being $x_1=1$, $x_2=11$, by the following example.

\begin{example}
    \label{ex:measure}
    Let $\gamma\equiv \{\gamma_i\}_{i=0}^9\in \RR^{10}$
    be the same sequence as in Example \ref{ex:no-measure}.
    Let $x_1=1$, $x_2=11$ and $d_1=2, d_2=4$.
    For
    $f(\x)=(\x-1)(\x-11)$ we have
    $$
    \cH_f(3)=
    \begin{pmatrix}
        1 & -7 & -26 & -102 \\[0.2em]
        -7 & -26 & -102 & -456 \\[0.2em]
        -26 & -102 & -456 & -2280 \\[0.2em]
        -102 & -456 & -2280 & -12240 
     \end{pmatrix}.
    $$
    The eigenvalues of $\cH_f(3)$ are 
        $-12683.9, -40.56, 3.28, 0.14$
    and hence $\cH_f(3)$ is invertible. 
    So 
        $$g(\x)=
        \x^4-
        \frac{95824 }{1601}\x^3+
        \frac{753912 }{1601}\x^2-
        \frac{1476768 }{1601}\x
        +\frac{220344}{1601}
        $$
    and $h(\x)=f(\x)g(\x)=\x^6-\sum_{i=0}^5\varphi_i \x^5$ is equal to
    $$
    h(\x)
    =
    \x^6-
    \frac{115036 }{1601}\x^5+
    \frac{1921411 }{1601}\x^4-
    \frac{11577776 }{1601}\x^3+
    \frac{26234592 }{1601}\x^2-
    \frac{18888576}{1601}\x+
    \frac{2423784}{1601}.
    $$
    The candidate for $\gamma_{10}$, coming from the potential measure for $\gamma$,
    is
    $$
    \gamma_{10}
    =
    \sum_{i=1}^{6}\varphi_{6-i} \gamma_{10-i}
    =\frac{5944515264}{1601}.
    $$
    The extended moment matrix $M_5$ is then equal to
    $$
    M_5
    =
    \begin{pmatrix}
        1 & 1 & 2 & 6 & 24 & 120 \\[0.2em]
        1 & 2 & 6 & 24 & 120 & 720 \\[0.2em]
        2 & 6 & 24 & 120 & 720 & 5040 \\[0.2em]
        6 & 24 & 120 & 720 & 5040 & 40320 \\[0.2em]
        24 & 120 & 720 & 5040 & 40320 & 362880 \\[0.2em]
        120 & 720 & 5040 & 40320 & 362880 & \frac{5944515264}{1601}
    \end{pmatrix}.
    $$
    Its eigenvalues are 
    $3.75\cdot 10^6, 5068.64, 38.02, 1.66, 0.35, 0.019$.
    So $M_5$ is positive definite and the measure for $\gamma$ exists. 
    The atoms are $x_1$, $x_2$ and the zeroes of $g(\x)$, i.e., 
    $x_3\approx 0.16, x_4 \approx 2.81, x_5 \approx 5.91, x_6 \approx 50.97$.
\end{example}

\end{document}